\documentclass{amsart}
\usepackage{setspace}
\usepackage{a4}
\usepackage{amsthm}
\usepackage{latexsym}
\usepackage{amsfonts}
\usepackage{graphicx}
\usepackage{textcomp}
\usepackage{cite}
\usepackage{enumerate}
\usepackage{amssymb}
\usepackage{hyperref}
\usepackage{amsmath}
\usepackage{tikz}
\usepackage[mathscr]{euscript}
\usepackage{mathtools}
\newtheorem{theorem}{Theorem}[section]

\newtheorem{corollary}[theorem] {Corollary}

\newtheorem{problem}[theorem]{Problem}

\title{This is the title}
\usepackage{fancyhdr}

\begin{document}
	\vspace{0.9cm}	
\hrule\hrule\hrule\hrule\hrule
\vspace{0.3cm}	
\begin{center}
{\bf{Octonionic Kittaneh Numerical Radius Inequalities}}\\
\vspace{0.3cm}
\hrule\hrule\hrule
\vspace{0.3cm}
\textbf{K. Mahesh Krishna}\\
School of Mathematics and Natural Sciences\\
Chanakya University Global Campus\\
NH-648, Haraluru Village\\
Devanahalli Taluk, 	Bengaluru  North District\\
Karnataka State 562 110 India \\
Email: kmaheshak@gmail.com\\

Date: \today
\hrule\hrule
\end{center}

\vspace{0.5cm}
%--------------------------------------
\textbf{Abstract}: We show that for certain classes of matrices over octonions, a weak version of breakthrough numerical radius inequalities obtained by Kittaneh [\textit{{Stud. Math.}, 2005}] hold.

\textbf{Keywords}: Numerical radius, Octonions.

\textbf{Mathematics Subject Classification (2020)}: 47A12, 17A35.\\

\hrule

%\tableofcontents
\hrule
\section{Introduction}
For $n \in \mathbb{N}$, let $\mathbb{C}^n$ be the standard Hilbert space (with the inner product linear in the first variable and conjugate linear in the second variable). Recall that 
the \textbf{numerical range} or the \textbf{field of values}  \cite{GUSTAFSONRAO, HALMOS, HORNJOHNSON, GOLDBERGTADMOR, GAUWUBOOK, TOEPLITZ, HAUSDORFF, ISTRATESCU, HALMOS2} of a matrix $T \in M_n(\mathbb{C})$ is defined as 
\begin{align*}
	W_\mathbb{C}(T)\coloneqq \{\langle Th, h\rangle: h \in \mathbb{C}^n, \|h\|=1\} \subseteq \mathbb{C}
\end{align*}
and the \textbf{numerical radius} of  $T$ is defined as 
\begin{align*}
	w_\mathbb{C}(T)\coloneqq \sup_{\lambda \in W_\mathbb{C}(T)}|\lambda|=\sup_{h \in \mathbb{C}^n, \|h\|=1} |\langle Th, h \rangle |.
\end{align*}
The Toeplitz-Hausdorff theorem says that the numerical range is always a convex subset of  $\mathbb{C}$ \cite{HAUSDORFF, TOEPLITZ, HALMOS, HORNJOHNSON, GUSTAFSONRAO, GAUWUBOOK, LI}.  Elementary calculations using the Cauchy-Schwarz inequality and the generalized polarization identity  \cite{GUSTAFSONRAO} show that
\begin{align}\label{FI}
	\frac{\|T\|}{2}\leq w_\mathbb{C}(T)\leq \|T\|.
\end{align}
 Inequalities (\ref{FI}) say that the map $w_\mathbb{C}(\cdot):M_n(\mathbb{C})\to \mathbb{R}$ is a norm which is equivalent to the operator norm.  
In 2005, Kittaneh made a far reaching generalization of Inequalities (\ref{FI}) \cite{KITTANEH}.
\begin{theorem} \cite{KITTANEH} (\textbf{Kittaneh Numerical Radius Inequalities}) \label{KT}
	Let $T \in M_n(\mathbb{C})$. Then 
\begin{align}\label{KI}
	\frac{1}{2}\sqrt{\|T^*T+TT^*\|}\leq w_\mathbb{C}(T)\leq 	\frac{1}{\sqrt{2}}\sqrt{\|T^*T+TT^*\|}.	
\end{align}
\end{theorem}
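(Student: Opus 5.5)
The plan is to pass through the Cartesian decomposition $T=A+iB$ with $A=(T+T^*)/2$ and $B=(T-T^*)/(2i)$ Hermitian. A direct computation gives
\begin{align*}
T^*T+TT^*=2(A^2+B^2).
\end{align*}
Both inequalities in (\ref{KI}) will then follow by comparing $w_\mathbb{C}(T)^2$ with $\|A^2+B^2\|$. The key tool is that for a unit vector $h\in\mathbb{C}^n$ we have $\langle Ah,h\rangle,\langle Bh,h\rangle\in\mathbb{R}$ and
\begin{align*}
|\langle Th,h\rangle|^2=\langle Ah,h\rangle^2+\langle Bh,h\rangle^2 .
\end{align*}
We also use that $w_\mathbb{C}$ is invariant under multiplying $T$ by a unimodular scalar $e^{i\theta}$.

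\textbf{Upper bound.} By Cauchy--Schwarz, $\langle Ah,h\rangle^2\le \|Ah\|^2=\langle A^2h,h\rangle$, and similarly for $B$. Hence
\begin{align*}
|\langle Th,h\rangle|^2\le \langle (A^2+B^2)h,h\rangle\le \|A^2+B^2\|=\tfrac12\|T^*T+TT^*\|.
\end{align*}
Taking the supremum over unit $h$ and then square roots gives $w_\mathbb{C}(T)\le \frac{1}{\sqrt2}\sqrt{\|T^*T+TT^*\|}$. This half is routine.

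\textbf{Lower bound.} For every real $\theta$, the Hermitian part of $e^{i\theta}T$ is $A_\theta=\cos\theta\,A-\sin\theta\,B$. For Hermitian $A_\theta$ the norm equals the numerical radius, so
\begin{align*}
\|A_\theta\|=\sup_{\|h\|=1}|\mathrm{Re}\,\langle e^{i\theta}Th,h\rangle|\le w_\mathbb{C}(T).
\end{align*}
Choosing $\theta=0$ and $\theta=-\pi/2$ gives $\|A\|\le w_\mathbb{C}(T)$ and $\|B\|\le w_\mathbb{C}(T)$. Therefore
\begin{align*}
\|A^2+B^2\|\le \|A\|^2+\|B\|^2\le 2w_\mathbb{C}(T)^2,
\end{align*}
so $\|T^*T+TT^*\|=2\|A^2+B^2\|\le 4w_\mathbb{C}(T)^2$. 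Taking square roots yields the left inequality in (\ref{KI}).

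The main obstacle is the lower bound, specifically recognizing that one should bound $\|A\|$ and $\|B\|$ separately by $w_\mathbb{C}(T)$ rather than working with $\langle (A^2+B^2)h,h\rangle$ directly. This step relies on the rotation invariance of $w_\mathbb{C}$ and on $\|S\|=w_\mathbb{C}(S)$ for Hermitian $S$. Both facts follow from the spectral theorem, or from the polarization identity applied to self-adjoint operators, so I would invoke them as standard. After that, only the identity $T^*T+TT^*=2(A^2+B^2)$ needs to be verified by expanding $(A+iB)^*(A+iB)+(A+iB)(A+iB)^*$; the cross terms $\pm i(AB-BA)$ cancel.
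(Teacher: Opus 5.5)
Your proof is correct. The paper does not prove Theorem~\ref{KT} itself; it is quoted from Kittaneh. However, its proof of Theorem~\ref{QKT} is Kittaneh's argument with eight self-adjoint parts instead of two, so that is the natural comparison.

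Your upper bound is the same as the paper's. Your lower bound takes a different route:
\begin{itemize}
\item The paper writes $|\langle Th,h\rangle|^2=\sum_{j=1}^m a_j^2$ with $a_j=\langle A_jh,h\rangle$ real. It uses $\sum_j a_j^2\ge\frac1m\bigl(\sum_j|a_j|\bigr)^2\ge\frac1m\bigl|\sum_j\pm a_j\bigr|^2$ to get $w(T)^2\ge\frac1m\|\sum_j\pm A_j\|^2$. It then adds the squares over several sign patterns so the cross terms cancel; for $m=2$ this is $(A+B)^2+(A-B)^2=2(A^2+B^2)$.
\item You instead bound each $\|A_j\|$ by $w(T)$ and apply the triangle inequality to $\sum_jA_j^2$.
\end{itemize}

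For $m=2$ both routes yield $\|A^2+B^2\|\le 2w_\mathbb{C}(T)^2$, so the constants agree, and yours is shorter. You do not even need the rotation by $e^{i\theta}$: the identity $|\langle Th,h\rangle|^2=\langle Ah,h\rangle^2+\langle Bh,h\rangle^2$ directly gives $|\langle Ah,h\rangle|,|\langle Bh,h\rangle|\le|\langle Th,h\rangle|$.

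Your route pays off when there are more summands. In Theorem~\ref{QKT} the paper uses only the eight patterns $S-2A_k$. Their squares sum to $4S^2+4\sum_kA_k^2$ rather than $8\sum_kA_k^2$, so discarding $4S^2$ loses a factor $2$ in $w_\mathbb{O}(T)^2$ and produces the constant $\frac{1}{4\sqrt2}$. Granting the identities used in that proof, your argument gives $\bigl\|\sum_{j=0}^7A_j^2\bigr\|\le\sum_j\|A_j\|^2\le 8w_\mathbb{O}(T)^2$, that is, $\frac14\sqrt{\|T^*T+TT^*\|}\le w_\mathbb{O}(T)$. Averaging over all $2^8$ sign patterns in the paper's route would give the same improved constant.
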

Since 
\begin{align*}
			\frac{\|T\|}{2}\leq \frac{1}{2}\sqrt{\|T^*T+TT^*\|}\leq	\frac{1}{\sqrt{2}}\sqrt{\|T^*T+TT^*\|}\leq \|T\|, \quad \forall T \in M_n(\mathbb{C}),
\end{align*}
Inequalities (\ref{KI}) improve Inequalities (\ref{FI}). In this  note, we show that for certain classes of octonionic matrices, a more restrictive version of Inequalities  (\ref{KI}) hold.  Let $\{e_j\}_{j=1}^7$ be the free symbols whose multiplication is governed by   following table \cite{BAEZ}. 
\begin{table}[h!]
	\begin{tabular}{c|ccccccc}
		$\times$ & $e_1$ & $e_2$ & $e_3$ & $e_4$ & $e_5$ & $e_6$ & $e_7$ \\ \hline
		$e_1$ & $-1$ & $e_4$ & $e_7$ & $-e_2$ & $-e_6$ & $e_5$ & $-e_3$ \\
		$e_2$ & $-e_4$ & $-1$ & $e_5$ & $e_1$ & $-e_3$ & $e_7$ & $-e_6$ \\
		$e_3$ & $-e_7$ & $-e_5$ & $-1$ & $e_6$ & $e_2$ & $-e_4$ & $e_1$ \\
		$e_4$ & $e_2$ & $-e_1$ & $-e_6$ & $-1$ & $e_7$ & $e_3$ & $-e_5$ \\
		$e_5$ & $e_6$ & $e_3$ & $-e_2$ & $-e_7$ & $-1$ & $-e_1$ & $e_4$ \\
		$e_6$ & $-e_5$ & $-e_7$ & $e_4$ & $-e_3$ & $e_1$ & $-1$ & $e_2$ \\
		$e_7$ & $e_3$ & $e_6$ & $-e_1$ & $e_5$ & $-e_4$ & $-e_2$ & $-1$\\
	\end{tabular}
\end{table}

Let  
\begin{align*}
	\mathbb{O}\coloneqq \left\{a_0+\sum_{j=1}^{7}a_je_j: a_0, \dots, a_7 \in \mathbb{R} \right\}
\end{align*}
be the  non-associative division algebra of octonions  with the conjugation
\begin{align*}
	\overline{a_0+\sum_{j=1}^{7}a_je_j}\coloneqq a_0-\sum_{j=1}^{7}a_je_j, \quad \forall a_0+\sum_{j=1}^{7}a_je_j \in 	\mathbb{O}
\end{align*}
and the modulus 
\begin{align*}
\left|a_0+\sum_{j=1}^{7}a_je_j\right|\coloneqq \sqrt{\sum_{j=0}^{7}a_j^2}, \quad \forall a_0+\sum_{j=1}^{7}a_je_j \in 	\mathbb{O}.	
\end{align*}

Let $\mathbb{O}^n$ be the standard right octonionic  Hilbert space equipped with the inner product 
\begin{align*}
	\langle (z_r)_{r=1}^n, (w_r)_{r=1}^n\rangle \coloneqq \sum_{r=1}^{n}\overline{z_r}w_r, \quad \forall (z_r)_{r=1}^n, (w_r)_{r=1}^n\in \mathbb{O}^n
\end{align*}
and the norm 
\begin{align*}
	\|(z_r)_{r=1}^n\|\coloneqq \left(\sum_{r=1}^{n}|z_r|^2\right)^\frac{1}{2}, \quad \forall (z_r)_{r=1}^n\in \mathbb{O}^n.
\end{align*}
Given a matrix $T \in M_n(\mathbb{O})$, its operator norm is defined as 
\begin{align*}
	\|T\|\coloneqq \sup_{h \in \mathbb{O}^n, \|h\|=1}\|Th\|.
\end{align*}
We derive octonionic version of Theorem \ref{KT} in Theorem \ref{QKT}.

\section{Octonionic Kittaneh Numerical Radius Inequalities}

The \textbf{octonionic numerical range} of an octonionic matrix $T \in M_n(\mathbb{O})$ \cite{RODMAN} is defined as 
\begin{align*}
	W_\mathbb{O}(T)\coloneqq \{\langle h,Th\rangle: h \in \mathbb{O}^n, \|h\|=1\}\subseteq \mathbb{O}
\end{align*}
and the \textbf{octonionic numerical radius} of $T$ is defined as 
\begin{align*}
	w_\mathbb{O}(T)\coloneqq \sup_{\lambda \in W_\mathbb{O}(T)}|\lambda|=\sup_{h \in \mathbb{O}^n, \|h\|=1}|\langle h, Th\rangle|.
\end{align*}
We first derive octonionic version of Inequalities (\ref{FI}). 
\begin{theorem}
Let $T \in M_n(\mathbb{O})$. Then 	
\begin{align*}
		\frac{\|T\|}{8}\leq w_\mathbb{O}(T)\leq \|T\|.
\end{align*}
\end{theorem}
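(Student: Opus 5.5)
The plan is to treat the two inequalities separately. The upper bound is a Cauchy--Schwarz argument. The lower bound reduces the octonion-valued form $\langle x, Ty\rangle$ to its eight real coordinates and controls each coordinate by $w_\mathbb{O}(T)$ through real polarization. The polarization step is where I am least sure the constant $8$ comes out.

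\emph{Upper bound.} The modulus on $\mathbb{O}$ is multiplicative, $|ab|=|a||b|$, because $\mathbb{O}$ is a composition algebra; this holds despite non-associativity. So for a unit vector $h \in \mathbb{O}^n$,
\begin{align*}
|\langle h, Th\rangle| \le \sum_{r=1}^n |\overline{h_r}(Th)_r| = \sum_{r=1}^n |h_r|\,|(Th)_r| \le \|h\|\,\|Th\| \le \|T\|,
\end{align*}
where the middle step is the real Cauchy--Schwarz inequality in $\mathbb{R}^n$. Taking the supremum over $h$ gives $w_\mathbb{O}(T)\le \|T\|$.

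\emph{Lower bound.} First I write $\|T\| = \sup |\langle x, Ty\rangle|$ over unit $x,y$, which is attained at $x = Ty/\|Ty\|$. Writing $\langle x, Ty\rangle = \sum_{k=0}^7 c_k e_k$ with $e_0=1$ and $c_k$ real, I will use $|\langle x,Ty\rangle| \le \sum_k |c_k|$, so it suffices to show $|c_k| \le w_\mathbb{O}(T)$ for each $k$. To identify $c_k$, I use the fact that the real part on $\mathbb{O}$ is associative and tracial: $\mathrm{Re}((ab)c)=\mathrm{Re}(a(bc))$ and $\mathrm{Re}(ab)=\mathrm{Re}(ba)$. These give
\begin{align*}
c_k = \mathrm{Re}\big(\overline{e_k}\langle x,Ty\rangle\big) = \sum_{r=1}^n \mathrm{Re}\big(\overline{x_r e_k}\,(Ty)_r\big) = \mathrm{Re}\,\langle x e_k, Ty\rangle.
\end{align*}
Here $\|x e_k\| = \|x\| = 1$. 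So every coordinate is a value of the single real bilinear form $B(u,v) \coloneqq \mathrm{Re}\langle u, Tv\rangle$ on unit vectors of $\mathbb{R}^{8n}\cong \mathbb{O}^n$.

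\emph{Main obstacle.} It remains to bound $|B(u,v)|$ by $w_\mathbb{O}(T)$ for unit $u,v$. Real polarization gives
\begin{align*}
B(u,v)+B(v,u) = \tfrac12\big(B(u+v,u+v) - B(u-v,u-v)\big),
\end{align*}
and each diagonal term satisfies $|B(z,z)| \le |\langle z,Tz\rangle| \le w_\mathbb{O}(T)\|z\|^2$. Since $\|u\pm v\|^2 \le 4$ only in the worst case, I will use the parallelogram law $\|u+v\|^2+\|u-v\|^2 = 4$ to get $|B(u,v)+B(v,u)| \le 2w_\mathbb{O}(T)$. The antisymmetric part $B(u,v)-B(v,u)$ is the real difficulty, because real polarization cannot see it; a rotation of $\mathbb{R}^2$ has zero real numerical radius.

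My first attempt will exploit the freedom in the special choice $u = x e_k$, $x = Ty/\|Ty\|$. I will try to trade the antisymmetric part for a symmetric one by replacing $v$ with $v e_j$ for a suitable imaginary unit $e_j$, mimicking the complex identity $\langle u, Tv\rangle = \mathrm{Re}\langle u,Tv\rangle + i\,\mathrm{Re}\langle u, T(iv)\rangle$. Non-associativity threatens this step, since $T(v e_j) \neq (Tv)e_j$ in general, and it must be checked entry by entry using the alternative laws and the Moufang identities.

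If this yields $|B(u,v)| \le w_\mathbb{O}(T)$ in the needed configuration, summing over the eight coordinates gives $\|T\| \le 8\,w_\mathbb{O}(T)$. If I only obtain the weaker bound $|B(u,v)|\le 2w_\mathbb{O}(T)$, I will replace $\sum_k|c_k|$ by $\big(\sum_k c_k^2\big)^{1/2}$ and try to pair coordinates so as to recover the factor $8$.
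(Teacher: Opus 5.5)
Your upper bound is correct and coincides with the paper's Cauchy--Schwarz argument. The lower bound, however, is not established, and the reduction you set up does not actually reduce anything.

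\textbf{The reduction is empty.} Since $B(u,v)$ is the Euclidean pairing of $u$ with $Tv$ in $\mathbb{R}^{8n}$, one has $\|T\|=\sup_{\|u\|=\|v\|=1}B(u,v)$. In your configuration $x=Ty/\|Ty\|$, the number $\langle x,Ty\rangle=\|Ty\|$ is real, so $c_1=\dots=c_7=0$ and the only coordinate is $c_0=\|Ty\|$. Three consequences follow:
\begin{itemize}
\item The ``main obstacle'' (the antisymmetric part of $B$) is the entire theorem.
\item Any estimate $|B(u,v)|\le C\,w_\mathbb{O}(T)$ would give $\|T\|\le C\,w_\mathbb{O}(T)$ outright, so the factor $8$ cannot come from summing coordinates.
\item Your target $|B(u,v)|\le w_\mathbb{O}(T)$ is false. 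For $T=\begin{pmatrix}0&1\\0&0\end{pmatrix}$ one has $w_\mathbb{O}(T)=\tfrac12$, yet $B(x,y)=1$ for $y=(0,1)^t$, $x=Ty$.
\end{itemize}
Your symmetric-part estimate is right, but it only yields $\|\tfrac12(T+T^*)\|\le w_\mathbb{O}(T)$.

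\textbf{The $ve_j$ trick fails.} Replacing $v$ by $ve_j$ breaks down for the reason you suspect: $h\mapsto Th$ is only real-linear, and $T(ve_j)\ne(Tv)e_j$. The paper's proof uses exactly this device, a polarization identity over $x\pm ye_j$, but that identity does not hold for non-self-adjoint $T$. Take $T=\begin{pmatrix}0&-1\\1&0\end{pmatrix}$, $x=(1,0)^t$, $y=(0,1)^t$. The left side is $4\langle y,Tx\rangle=4$. On the right, $\langle x\pm y,T(x\pm y)\rangle=0$ and $\langle x\pm ye_j,T(x\pm ye_j)\rangle=\mp2e_j$, so the right side equals $\sum_{j=1}^7(-4e_j)e_j=28$. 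Following the paper would therefore not close your gap.

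\textbf{What does close it.} Test $\langle h,Th\rangle$ only on vectors whose entries lie in an associative subalgebra, where no associators appear, and apply the triangle inequality to a coordinate splitting of $T$.
\begin{itemize}
\item Write $T=H+A$ with $H=\tfrac12(T+T^*)$. Then $\|H\|\le w_\mathbb{O}(T)$ by your symmetric estimate.
\item Expand entrywise $T=\sum_{k=0}^7e_kT_k$ with $T_k\in M_n(\mathbb{R})$. Then $A=\tfrac12(T-T^*)=\tfrac12(T_0-T_0^t)+\sum_{k=1}^7e_k\tfrac12(T_k+T_k^t)$.
\item For a real unit vector $v$, $\langle v,Tv\rangle=\sum_k(v^tT_kv)e_k$. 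Hence $\|\tfrac12(T_k+T_k^t)\|\le w_\mathbb{O}(T)$, and multiplying a real matrix by $e_k$ does not change its norm on $\mathbb{O}^n$.
\item Take $h$ with entries in $\mathbb{R}+\mathbb{R}e_1\cong\mathbb{C}$. The terms of $\langle h,Th\rangle$ with $k\ge2$ lie in $(\mathbb{R}+\mathbb{R}e_1)e_k$, so the $(\mathbb{R}+\mathbb{R}e_1)$-component is the complex quadratic form $h^*(T_0+e_1T_1)h$. Consequently the skew-Hermitian part $\tfrac12(T_0-T_0^t)+e_1\tfrac12(T_1+T_1^t)$ of this complex matrix has norm at most $w_\mathbb{C}(T_0+e_1T_1)\le w_\mathbb{O}(T)$. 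Its norm on $\mathbb{O}^n$ equals its complex norm, because $\mathbb{O}$ is an orthogonal sum of four copies of $\mathbb{R}+\mathbb{R}e_1$ under left multiplication.
\end{itemize}
Adding up gives $\|T\|\le w+w+6w=8\,w_\mathbb{O}(T)$.
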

\begin{proof}
	Clearly second inequality comes from Cauchy-Schwarz inequality. Let $x, y \in \mathbb{O}^n.$ By the generalized polarization identity, 
	\begin{align*}
		4 \langle y, Tx \rangle =& \langle (x+y), T(x+y) \rangle- \langle (x-y), T(x-y) \rangle\\
		&+\sum_{j=1}^{7}(\langle (x+ye_j), T(x+ye_j) \rangle- \langle (x-ye_j), T(x-ye_j))e_j.
	\end{align*}
Hence
\begin{align*}
	4| \langle y, Tx \rangle|&\leq w_\mathbb{O}(T)\left(\|x+y\|^2+\|x-y\|^2+\sum_{j=1}^{7}(\|x+ye_j\|^2+\|x-ye_j\|^2)\right)\\
	&=w_\mathbb{O}(T)\left(2\|x\|^2+2\|y\|^2+\sum_{j=1}^{7}(2\|x\|^2+2\|y\|^2)\right)\\
	&=16w_\mathbb{O}(T)(\|x\|^2+\|y\|^2).
\end{align*}
 Therefore 
 \begin{align*}
 	\|T\|=\sup_{x, y \in \mathbb{O}^n, \|x\|=\|y\|=1}|\langle y, Tx\rangle|\leq 8 w_\mathbb{O}(T).
 \end{align*}
\end{proof}
Let $T \in M_n(\mathbb{O})$ be self-adjoint.  We note that
\begin{align}\label{ST}
	\|T\|=\sup_{h \in \mathbb{O}^n, \|h\|=1}|\langle h, Th\rangle|.
\end{align}
In fact, define $M\coloneqq \sup_{h \in \mathbb{O}^n, \|h\|=1}|\langle h, Th\rangle|$. The we have
\begin{align*}
	M= \sup_{h \in \mathbb{O}^n, \|h\|=1}|\langle h, Th\rangle|\leq \sup_{h \in \mathbb{O}^n, \|h\|=1}\|h\| \|Th\|=\sup_{h \in \mathbb{O}^n, \|h\|=1}\|Th\|=\|T\|.
\end{align*}
Now let $x, y \in \mathbb{O}^n$. Then 
\begin{align*}
	\langle x+y, T(x+y)\rangle -\langle x-y, T(x-y)\rangle =2(\langle x, Ty \rangle +\langle y, Tx \rangle).
\end{align*}
Therefore
\begin{align*}
	|\langle x, Ty \rangle +\langle y, Tx \rangle|&\leq \frac{1}{2}(|\langle x+y, T(x+y)\rangle|+|\langle x-y, T(x-y)\rangle|)\\
	&\leq  \frac{1}{2}(M\|x+y\|^2+M\|x-y\|^2)\\
	&=M(\|x\|^2+\|y\|^2).
\end{align*}
Since, $T$ is self-adjoint, 
\begin{align}\label{AE}
		|\langle x, Ty \rangle +\langle Ty, x \rangle|\leq M(\|x\|^2+\|y\|^2), \quad \forall x, y \in \mathbb{O}^n.
\end{align}
For a given $h \in \mathbb{O}^n$ with $ \|h\|=1$ and $\|Th\|\neq 0$,  we take 
\begin{align*}
	x=\frac{Th}{\|Th\|}, \quad y = h
\end{align*}
in Inequality (\ref{AE}). Then we have 
\begin{align*}
	2\|Th\|\leq 2M.
\end{align*}
By varying $h\in \mathbb{O}^n$ with $ \|h\|=1$ we get $\|T\|\leq M$. Therefore $\|T\|=M$. \\
We now derive our main theorem using Equation (\ref{ST}).
 In the paper, the anti-commutator of $T, S \in M_n(\mathbb{O})$ is defined as $\{T, S\}\coloneqq TS+ST$.
\begin{theorem}\label{QKT}
(\textbf{Octonionic Kittaneh Numerical Radius Inequalities})
Let $T \in M_n(\mathbb{O})$. Assume that 
\begin{align*}
	T=A_0+\sum_{j=1}^{7}e_jA_j \quad \text{ for some } A_0, \dots, A_7 \in M_n(\mathbb{O})
\end{align*}
satisfying the following conditions. 
	\begin{enumerate}[\upshape(i)]
		\item $A^*_j=A_j$ for all $0\leq j \leq 7$.
	\item $e_jA_j=A_je_j$ for all $1\leq j \leq 7$.
	\item $\sum_{j,k=1, j \neq k}^{7}\{e_jA_j, e_kA_k\}=0$.
		\end{enumerate}
Then 
\begin{align}\label{MI}
	\frac{\|T\|}{8}\leq	\frac{1}{4\sqrt{2}}\sqrt{\|T^*T+TT^*\|}\leq w_\mathbb{O}(T)\leq\frac{1}{\sqrt{2}}\sqrt{\|T^*T+TT^*\|}\leq \|T\|.	
\end{align}	
\end{theorem}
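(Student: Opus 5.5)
The plan is to follow Kittaneh's Cartesian-decomposition argument, with the octonionic units $e_1,\dots,e_7$ playing the role of $i$. Throughout we write $S\coloneqq\sum_{j=1}^7 e_jA_j$, so that $T=A_0+S$.

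\textbf{Step 1 (the adjoint and $T^*T+TT^*$).} By (i) and (ii), $(e_jA_j)^*=A_j^*\overline{e_j}=-A_je_j=-e_jA_j$. Hence $S^*=-S$ and $T^*=A_0-S$. Expanding,
\begin{align*}
T^*T+TT^*=(A_0-S)(A_0+S)+(A_0+S)(A_0-S)=2A_0^2-2S^2 .
\end{align*}
Split $S^2$ into its diagonal and off-diagonal parts:
\begin{align*}
S^2=\sum_{j=1}^7(e_jA_j)^2+\frac12\sum_{j\neq k}\{e_jA_j,e_kA_k\}.
\end{align*}
Condition (iii) kills the off-diagonal part. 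Using (ii), $(e_jA_j)^2=e_j e_j A_jA_j=-A_j^2$. Therefore
\begin{align*}
T^*T+TT^*=2\sum_{j=0}^7A_j^2 .
\end{align*}

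\textbf{Step 2 (splitting the quadratic form).} For a unit vector $h$, each $\langle h,A_jh\rangle$ is real because $A_j$ is self-adjoint. Using (ii), one wants
\begin{align*}
\langle h,Th\rangle=\langle h,A_0h\rangle+\sum_{j=1}^7 e_j\langle h,A_jh\rangle ,
\end{align*}
and hence
\begin{align*}
|\langle h,Th\rangle|^2=\sum_{j=0}^7\langle h,A_jh\rangle^2 ,
\end{align*}
since the eight components lie along the orthonormal real directions $1,e_1,\dots,e_7$.

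This is the step I expect to be the main obstacle. Pulling $e_j$ out of $\sum_r\overline{h_r}\,e_j(A_jh)_r$ is not automatic in the non-associative, non-commutative setting, so the justification has to come from the commutation hypothesis (ii). The same care is needed for the reassociation $e_jA_je_jA_j=e_je_jA_jA_j$ in Step 1.

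\textbf{Step 3 (upper bound).} By Cauchy--Schwarz, $\langle h,A_jh\rangle^2\le\|A_jh\|^2=\langle h,A_j^2h\rangle$. Combining this with Step 2 and Step 1,
\begin{align*}
|\langle h,Th\rangle|^2\le\Big\langle h,\sum_{j=0}^7 A_j^2h\Big\rangle\le\frac12\|T^*T+TT^*\| .
\end{align*}
Taking the supremum over unit $h$ gives $w_\mathbb{O}(T)\le\frac{1}{\sqrt2}\sqrt{\|T^*T+TT^*\|}$.

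\textbf{Step 4 (lower bound).} Step 2 gives $|\langle h,A_jh\rangle|\le|\langle h,Th\rangle|$ for every unit $h$, so $w_\mathbb{O}(A_j)\le w_\mathbb{O}(T)$. Since $A_j$ is self-adjoint, Equation (\ref{ST}) gives $\|A_j\|=w_\mathbb{O}(A_j)$. Consequently
\begin{align*}
\frac12\|T^*T+TT^*\|=\Big\|\sum_{j=0}^7A_j^2\Big\|\le\sum_{j=0}^7\|A_j\|^2\le 8\,w_\mathbb{O}(T)^2 .
\end{align*}
This yields $w_\mathbb{O}(T)\ge\frac14\sqrt{\|T^*T+TT^*\|}$, which is stronger than the stated bound $\frac{1}{4\sqrt2}\sqrt{\|T^*T+TT^*\|}$.

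\textbf{Step 5 (outer inequalities).} For the right end, $\|T^*T+TT^*\|\le 2\|T\|^2$, so $\frac{1}{\sqrt2}\sqrt{\|T^*T+TT^*\|}\le\|T\|$. For the left end, $\|T\|^2=\|T^*T\|\le\|T^*T+TT^*\|$, because $T^*T$ and $TT^*$ are positive, i.e.\ $\langle h,\cdot\,h\rangle\ge0$. This gives $\frac{\|T\|}{8}\le\frac{1}{8}\sqrt{\|T^*T+TT^*\|}\le\frac{1}{4\sqrt2}\sqrt{\|T^*T+TT^*\|}$.
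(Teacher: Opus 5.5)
Your Steps 1--3 reproduce the paper's argument, and the step you flagged is exactly where both arguments break. Hypothesis (ii) cannot close it, because the identity $\langle h,Th\rangle=\langle h,A_0h\rangle+\sum_{j=1}^7 e_j\langle h,A_jh\rangle$ is false in general.

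\textbf{Step 1 is fine.} (ii) puts every entry of $A_j$ in $\mathbb{R}+\mathbb{R}e_j$, a commutative associative subalgebra. So $(e_jA_j)^*=-e_jA_j$ and $(e_jA_j)^2=-A_j^2$ are legitimate.

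\textbf{Step 2 fails.} There you must replace $\overline{h_r}\,(e_jy_r)$ by $e_j(\overline{h_r}\,y_r)$, and (ii) says nothing about the entries of $h$. This fails already inside a quaternion subalgebra. Take $n=2$, $A_1=\mathrm{diag}(1,-1)$ and $A_j=0$ for $j\neq 1$, so $T=\mathrm{diag}(e_1,-e_1)$ satisfies (i)--(iii). For $h=\frac{1}{\sqrt2}(1,e_2)$ one gets $\langle h,Th\rangle=\frac12(e_1+e_2e_4)=e_1$, whereas $\langle h,A_1h\rangle=\frac12(1+e_2^2)=0$. Hence $|\langle h,Th\rangle|^2=1\neq 0=\sum_j\langle h,A_jh\rangle^2$.

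Two further failures in the same step:
\begin{enumerate}[\upshape(i)]
\item For $T=\mathrm{diag}(e_1,e_2)$ (that is, $A_1=\mathrm{diag}(1,0)$, $A_2=\mathrm{diag}(0,1)$) and $h=\frac12(\sqrt2,\,1-e_4)$, one gets $\langle h,Th\rangle=0$ but $\langle h,A_1h\rangle=\frac12$. So the pointwise inequality $|\langle h,A_jh\rangle|\le|\langle h,Th\rangle|$ in your Step 4 fails too.
\item Reality of $\langle h,A_jh\rangle$ fails once $A_j$ has non-real entries. For example, $A_1$ with zero diagonal, $(A_1)_{12}=e_1$, $(A_1)_{21}=-e_1$, and $h=\frac{1}{\sqrt2}(e_2,e_3)$ give $\langle h,A_1h\rangle=e_6$.
\end{enumerate}
These matrices do not violate the stated inequalities. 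They invalidate the identity on which both of your bounds (Steps 3 and 4) rest. The paper's proof asserts the same equality without justification, so your proposal inherits the paper's gap rather than repairing it.

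\textbf{A partial repair.} When all $A_j$ are real, restricting to $h\in\mathbb{R}^n$ makes Step 2 exact. A real symmetric matrix attains its norm on real vectors, so your Step 4 then rigorously gives $\|A_j\|\le w_{\mathbb{O}}(T)$. The upper bound, however, needs every $h\in\mathbb{O}^n$, and nothing in the proposal handles those.

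Setting Step 2 aside, you depart from the paper in two places.

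\textbf{Lower bound.} The paper uses $\sum_j x_j^2\ge\frac18\bigl(\sum_j|x_j|\bigr)^2$, the sign patterns $S-2A_k$ with $S=\sum_jA_j$, and a sum over $k$, arriving at $\frac{1}{4\sqrt2}$. Your route is $\|A_j\|=w_{\mathbb{O}}(A_j)\le w_{\mathbb{O}}(T)$ via (\ref{ST}), then $\|\sum_jA_j^2\|\le\sum_j\|A_j\|^2$. This is Kittaneh's original argument. It is shorter, avoids the paper's use of $\|B\|^2=\|B^2\|$ for the mixed matrix $B=\sum_j\pm A_j$, and gives the better constant $\frac14$. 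It still tacitly uses $A_0^2h=A_0(A_0h)$, as the paper does in its upper bound; Artin's theorem guarantees this only for $j\ge 1$.

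\textbf{Outer inequalities.} The paper's proof gives no argument for these. Your Step 5 invokes $\|T\|^2=\|T^*T\|$ and $\|T^*T\|\le\|T\|^2$. These C$^*$-identities rest on $(T^*T)h=T^*(Th)$, which fails for octonionic matrices in general; only $\|Th\|^2=\mathrm{Re}\langle h,T^*(Th)\rangle$ survives. So Step 5 also needs an argument specific to the octonionic setting.
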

\begin{proof}
	We have 
	\begin{align*}
		T=A_0+\sum_{j=1}^{7}e_jA_j, \quad T^*=A_0-\sum_{j=1}^{7}e_jA_j.
	\end{align*}
Therefore 
\begin{align*}
	T^*T+TT^*&=\left(A_0-\sum_{j=1}^{7}e_jA_j\right)\left(A_0+\sum_{j=1}^{7}e_jA_j\right)+\left(A_0+\sum_{j=1}^{7}e_jA_j\right)\left(A_0-\sum_{j=1}^{7}e_jA_j\right)\\
	&=2\left(\sum_{j=0}^{7}A^2_j\right)-2\sum_{j,k=1, j \neq k}^{7}\{e_jA_j, e_kA_k\}\\
	&=2\left(\sum_{j=0}^{7}A^2_j\right).
\end{align*}
For $h \in \mathbb{O}^n$ with $\|h\|=1$, we have
\begin{align*}
|\langle h, Th \rangle |^2&=\left|\left\langle h, \left(A_0+\sum_{j=1}^{7}e_jA_j\right)h \right\rangle \right|^2\\	
&=\left|\left\langle h, A_0h \right \rangle+ \sum_{j=1}^{7}e_j\left\langle h, A_jh \right \rangle\right|^2\\
&=\sum_{j=0}^{7}|\langle h, A_jh  \rangle|^2\\
&\leq \sum_{j=0}^{7}\|A_jh\|^2\\
&= \sum_{j=0}^{7}\langle h, A_j^2h \rangle \\
&=\left\langle h, \left(\sum_{j=0}^{7}A_j^2\right)h \right \rangle
\end{align*}
Since all $A_j$'s  are self-adjoint, we have
\begin{align*}
	w_\mathbb{O}(T)^2&= \sup_{h \in \mathbb{O}^n, \|h\|=1} |\langle h, Th \rangle |^2\\
	&\leq \sup_{h \in \mathbb{O}^n, \|h\|=1} \left\langle h, \left(\sum_{j=0}^{7}A_j^2\right)h \right\rangle\\
	&=\left\|\sum_{j=0}^{7}A_j^2\right\|\\
	&=\frac{1}{2}\|	T^*T+TT^*\|.
\end{align*}
On the other hand, for $h \in \mathbb{O}^n$ with $\|h\|=1$, using the Cauchy-Schwarz inequality, we also have 
\begin{align*}
|\langle h, Th \rangle |^2&=\left|\left\langle h, \left(A_0+\sum_{j=1}^{7}e_jA_j\right)h \right\rangle \right|^2\\	
&=\left|\left\langle h, A_0h \right \rangle+ \sum_{j=1}^{7}e_j\left\langle h, A_jh \right \rangle\right|^2\\
&=\sum_{j=0}^{7}|\langle h, A_jh  \rangle|^2\\
&\geq 	\frac{1}{8}\left(\sum_{j=0}^{7}|\langle h, A_jh  \rangle|\right)^2\\
&\geq 	\frac{1}{8}\left|\sum_{j=0}^{7}\langle h, (\pm A_j)h  \rangle\right|^2\\
&=\frac{1}{8}\left|\left\langle h, \sum_{j=0}^{7}(\pm A_jh) \right\rangle\right |^2.
\end{align*}
Since all $A_j$'s are self-adjoint, we have
\begin{align*}
	w_\mathbb{O}(T)^2&= \sup_{h \in \mathbb{O}^n, \|h\|=1} |\langle h, Th \rangle |^2\\
	&\geq \frac{1}{8}\sup_{h \in \mathbb{O}^n, \|h\|=1}\left|\left\langle h, \sum_{j=0}^{7}(\pm A_jh) \right\rangle\right  |^2\\
	&= \frac{1}{8}\left\|\sum_{j=0}^{7}(\pm A_j)\right\|^2\\
	&=\frac{1}{8}\left\|\left(\sum_{j=0}^{7}(\pm A_j)\right)^2\right\|.
\end{align*}
Define 
\begin{align*}
	S\coloneqq \sum_{j=0}^{7}A_j.
\end{align*}
Then 
\begin{align*}
		64w_\mathbb{O}(T)^2&\geq \sum_{k=0}^{7}\|(S-2A_k)^2\|\\
		&\geq \left\| \sum_{k=0}^{7}(S-2A_k)^2\right\|\\
		&=\left\|\sum_{k=0}^{7}(S^2+4A_k^2-2SA_k-2A_kS)\right\|\\
		&=\left\|\sum_{k=0}^{7}S^2+4\sum_{k=0}^{7}A_k^2-2S\sum_{k=0}^{7}A_k-2\sum_{k=0}^{7}A_kS\right\|\\
		&=\left\|8S^2+4\sum_{k=0}^{7}A_k^2-2S^2-2S^2\right\|\\
		&=\left\|4S^2+4\sum_{k=0}^{7}A_k^2\right\|\\
		&\geq 4\left\|\sum_{k=0}^{7}A_k^2\right\|\\
		&=2\|T^*T+TT^*\|.
\end{align*}
\end{proof}
\begin{corollary}
Let $T \in M_n(\mathbb{O})$. Assume that 
\begin{align*}
T=A_0+\sum_{j=1}^{7}e_jA_j \quad \text{ for some } A_0, \dots, A_7 \in M_n(\mathbb{O})
\end{align*}
satisfying the following conditions. 
	\begin{enumerate}[\upshape(i)]
	\item $A^*_j=A_j$ for all $0\leq j \leq 7$.
	\item $e_jA_j=A_je_j$ for all $1\leq j \leq 7$.
	\item $\{e_jA_j, e_kA_k\}=0$ for all $1\leq j \neq k \leq 7$.
\end{enumerate}
Then 
\begin{align*}
	\frac{1}{4\sqrt{2}}\sqrt{\|T^*T+TT^*\|}\leq w_\mathbb{O}(T)\leq\frac{1}{\sqrt{2}}\sqrt{\|T^*T+TT^*\|}.	
\end{align*}		
\end{corollary}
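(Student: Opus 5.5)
The plan is to reduce the corollary to Theorem \ref{QKT}: its hypotheses (i) and (ii) coincide with those of the theorem, so only hypothesis (iii) of the theorem has to be checked. Once that is done, the second and third inequalities in (\ref{MI}) are exactly the two bounds claimed here.

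To verify (iii) of Theorem \ref{QKT}, I will use the pointwise assumption $\{e_jA_j, e_kA_k\}=0$ for every pair $1\leq j\neq k\leq 7$. Summing over all ordered pairs with $j\neq k$ gives
\begin{align*}
\sum_{j,k=1, j \neq k}^{7}\{e_jA_j, e_kA_k\}=\sum_{j,k=1, j \neq k}^{7}0=0.
\end{align*}
This is a finite sum of zero matrices, so it raises no associativity issue in $M_n(\mathbb{O})$. Hence $T$, $A_0,\dots,A_7$ satisfy (i)--(iii) of Theorem \ref{QKT}.

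Applying Theorem \ref{QKT} then gives the full chain (\ref{MI}), and I will simply discard its outer two inequalities, which compare with $\|T\|$. This leaves
\begin{align*}
\frac{1}{4\sqrt{2}}\sqrt{\|T^*T+TT^*\|}\leq w_\mathbb{O}(T)\leq\frac{1}{\sqrt{2}}\sqrt{\|T^*T+TT^*\|}.
\end{align*}
I expect no real obstacle. The only point needing care is that the hypothesis of the corollary is pairwise and the hypothesis of the theorem is summed, and the pairwise version is strictly stronger, so the corollary is a direct specialization.
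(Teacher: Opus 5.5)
Your proposal is correct and is essentially how the paper handles this corollary: the paper gives no separate proof and states it directly after Theorem~\ref{QKT} as a special case. The pairwise vanishing of $\{e_jA_j, e_kA_k\}$ gives the summed hypothesis (iii) of that theorem, and the two middle inequalities of (\ref{MI}) are exactly the claimed bounds.
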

Following corollary says that using self adjoint real matrices, we can easily construct octonionic matrices which satisfy  Inequalities (\ref{MI}). 
\begin{corollary}
Let $A_0, \dots, A_7 \in M_n(\mathbb{R})$ satisfying the  following conditions.  
\begin{enumerate}[\upshape(i)]
	\item $A_j^t=A_j$ for all $0\leq j \leq 7$.
	\item $\sum_{j,k=1, j \neq k}^{7}\{A_j, A_k\}=0$.
\end{enumerate}
Define 
\begin{align*}
	T\coloneqq A_0+\sum_{j=1}^{7}e_jA_j \in M_n(\mathbb{O}).
\end{align*}
Then 
\begin{align*}
	\frac{1}{4\sqrt{2}}\sqrt{\|T^*T+TT^*\|}\leq w_\mathbb{O}(T)\leq\frac{1}{\sqrt{2}}\sqrt{\|T^*T+TT^*\|}.	
\end{align*}		
\end{corollary}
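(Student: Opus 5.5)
We will obtain the statement as a direct specialization of Theorem \ref{QKT}: given real matrices $A_0,\dots,A_7$ satisfying (i) and (ii) of the corollary, we regard them as elements of $M_n(\mathbb{O})$ and check that the three hypotheses of Theorem \ref{QKT} hold for $T=A_0+\sum_{j=1}^{7}e_jA_j$. The conclusion is then exactly the middle two inequalities of (\ref{MI}).

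First, hypothesis (i) of Theorem \ref{QKT}. For a real matrix the octonionic adjoint is the conjugate transpose, and conjugation fixes real entries, so $A_j^*=A_j^t=A_j$ by assumption (i). Second, hypothesis (ii) of Theorem \ref{QKT}. The products $e_jA_j$ and $A_je_j$ are entrywise scalings, with entries $e_j a$ and $a e_j$ for real $a$. Since real scalars lie in the centre of $\mathbb{O}$, these agree, so $e_jA_j=A_je_j$.

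Third, hypothesis (iii) of Theorem \ref{QKT}, which is the only step requiring care. We compute the entries of $(e_jA_j)(e_kA_k)$. Because the entries of $A_j,A_k$ are real, each product $(e_ja)(e_kb)$ equals $ab\,e_je_k$: the real scalars commute and associate with everything, since the real subalgebra is central and any subalgebra generated by two elements of $\mathbb{O}$ is associative. Hence $(e_jA_j)(e_kA_k)=(e_je_k)A_jA_k$, and likewise $(e_kA_k)(e_jA_j)=(e_ke_j)A_kA_j$. For $j\neq k$ the table gives $e_ke_j=-e_je_k$, so
\begin{align*}
\{e_jA_j,e_kA_k\}=(e_je_k)(A_jA_k-A_kA_j).
\end{align*}
Summing over ordered pairs $j\neq k$, the terms for $(j,k)$ and $(k,j)$ both equal $(e_je_k)[A_j,A_k]$, since both factors change sign under the swap. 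The sum of anticommutators therefore becomes $2\sum_{j<k}(e_je_k)[A_j,A_k]$, which is not literally the expression $\sum\{A_j,A_k\}$ in assumption (ii).

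This mismatch is where we expect the main obstacle. The plan is to note that the proof of Theorem \ref{QKT} uses hypothesis (iii) only through the identity $T^*T+TT^*=2\sum_{j=0}^7A_j^2$. We would verify that identity directly for real $A_j$: expanding, the cross terms $(\pm e_jA_j)(\pm e_kA_k)$ with $j\neq k$ appear in $T^*T$ and in $TT^*$ with opposite signs, and so cancel. This needs no condition at all, while the diagonal terms give $-e_j^2A_j^2=A_j^2$. Assumption (ii) guarantees, in the form the theorem literally states, that the expansion written in the proof of Theorem \ref{QKT} collapses. With the identity in hand, the remaining steps of that proof go through verbatim: the evaluation of $|\langle h,Th\rangle|^2$ as $\sum_j|\langle h,A_jh\rangle|^2$ (real $\langle h,A_jh\rangle$), Equation (\ref{ST}), and the $\pm$ sign averaging with $S=\sum A_j$. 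Together these yield the claimed two-sided bound.
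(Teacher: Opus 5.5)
Your observation that hypothesis (ii) does not literally give hypothesis (iii) of Theorem \ref{QKT} is correct. The paper simply treats the corollary as an immediate specialization and passes over this. The patch fails, however: the identity $T^*T+TT^*=2\sum_{j=0}^7A_j^2$ does \emph{not} hold unconditionally for real symmetric $A_j$. Put $B=\sum_{j=1}^7e_jA_j$, so $B^*=-B$ and $T^*T+TT^*=(A_0-B)(A_0+B)+(A_0+B)(A_0-B)=2A_0^2-2B^2$. Only the $A_0$--$B$ cross terms cancel. Each product $(e_jA_j)(e_kA_k)$ with $j,k\geq1$ enters \emph{both} $T^*T$ and $TT^*$ with sign $-$, which is why the expansion in the proof of Theorem \ref{QKT} keeps the anticommutators. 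Using your own formula $\{e_jA_j,e_kA_k\}=(e_je_k)[A_j,A_k]$, one gets $T^*T+TT^*=2\sum_{j=0}^7A_j^2-2\sum_{1\leq j<k\leq7}(e_je_k)[A_j,A_k]$. Hypothesis (ii) is an anticommutator condition on the real matrices and does not control these commutators.

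Concretely, take $n=2$, $A_1=\bigl(\begin{smallmatrix}1&0\\0&-1\end{smallmatrix}\bigr)$, $A_2=\bigl(\begin{smallmatrix}0&1\\1&0\end{smallmatrix}\bigr)$, and all other $A_j=0$. Then $\{A_1,A_2\}=0$, so (i) and (ii) hold and $2\sum_jA_j^2=4I$. But $T=\bigl(\begin{smallmatrix}e_1&e_2\\e_2&-e_1\end{smallmatrix}\bigr)$ satisfies $T^*=-T$ and $T^*T+TT^*=-2T^2=\bigl(\begin{smallmatrix}4&-4e_4\\4e_4&4\end{smallmatrix}\bigr)$. This matrix has $(1,e_4)$ as an eigenvector for the eigenvalue $8$, so its norm is $8$, not $4$.

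The same example shows that the remaining steps cannot be reused verbatim either. For $h=\frac{1}{\sqrt2}(1,e_4)$ one computes $Th=\frac{1}{\sqrt2}(2e_1,2e_2)$ and $\langle h,Th\rangle=2e_1$, while $\langle h,A_1h\rangle=\langle h,A_2h\rangle=0$. Hence the evaluation $|\langle h,Th\rangle|^2=\sum_j\langle h,A_jh\rangle^2$ is false over $\mathbb{O}$. The reason is that $\langle h,e_jA_jh\rangle=\sum_{r,s}(A_j)_{rs}\overline{h_r}(e_jh_s)$ is not $e_j\langle h,A_jh\rangle$ once the entries of $h$ fail to commute with $e_j$. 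The bound $w_\mathbb{O}(T)^2\leq\|\sum_jA_j^2\|$ also fails here, since $w_\mathbb{O}(T)^2\geq4>2$. So your identity combined with that step would ``prove'' $w_\mathbb{O}(T)\leq\sqrt2$, which is false.

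With the correct value $\|T^*T+TT^*\|=8$ and $w_\mathbb{O}(T)\leq\|T\|=2$, the stated bounds do hold in this example; it refutes your route, not the corollary. A reduction to Theorem \ref{QKT} needs two things: a hypothesis that actually forces $\sum_{j<k}(e_je_k)[A_j,A_k]=0$ (this is what (iii) says for real matrices; pairwise commuting $A_1,\dots,A_7$ would suffice), and a valid treatment of $\langle h,e_jA_jh\rangle$. As written, the proposal does not establish the statement.
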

Kittaneh actually  showed that Theorem \ref{KT} holds for any bounded linear operator on a complex Hilbert space. We state here the octonionic Hilbert space version of Theorem \ref{QKT}, whose proof is similar to that of Theorem \ref{QKT}. For basics on octonionic Hilbert spaces, we refer \cite{GOLDSTINEHORWITZ}.
\begin{theorem}
	Let $\mathcal{Q}$ be a octonionic Hilbert space. Let $T:\mathcal{Q} \to \mathcal{Q} $ be a bounded linear operator. Assume that 
	\begin{align*}
		T=A_0+\sum_{j=1}^{7}e_jA_j, \text{ for some bounded linear operators } A_0, \dots, A_7 : \mathcal{Q} \to \mathcal{Q}
	\end{align*}
	satisfying the following conditions. 
	\begin{enumerate}[\upshape(i)]
		\item $A^*_j=A_j$ for all $0\leq j \leq 7$.
		\item $e_jA_j=A_je_j$ for all $1\leq j \leq 7$.
	\item $\sum_{j,k=1, j \neq k}^{7}\{e_jA_j, e_kA_k\}=0$.
	\end{enumerate}
	Then 
	\begin{align*}
		\frac{1}{4\sqrt{2}}\sqrt{\|T^*T+TT^*\|}\leq w_\mathbb{O}(T)\leq\frac{1}{\sqrt{2}}\sqrt{\|T^*T+TT^*\|}.	
	\end{align*}
\end{theorem}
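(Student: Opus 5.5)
The plan is to repeat the argument of Theorem \ref{QKT}, replacing $\mathbb{O}^n$ by $\mathcal{Q}$ and every matrix identity by the corresponding identity for bounded operators. Only three properties of $\mathbb{O}^n$ were used there: the Cauchy--Schwarz inequality, the way the units $e_j$ pass through the inner product, and Equation (\ref{ST}) for self-adjoint operators. So the first task is to check that each of these holds in a general octonionic Hilbert space, using the framework of \cite{GOLDSTINEHORWITZ}.

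\textbf{Step 1: the anticommutator.} Expand $T^*T+TT^*$ with $T^*=A_0-\sum_{j=1}^7 e_jA_j$. Here (i) and (ii) ensure that $(e_jA_j)^*=-e_jA_j$, and hence that $(e_jA_j)^2=-A_j^2$. The cross terms collapse to the anticommutators, which (iii) kills. This gives
\begin{align*}
T^*T+TT^*=2\sum_{j=0}^7 A_j^2 .
\end{align*}

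\textbf{Step 2: Equation (\ref{ST}) on $\mathcal{Q}$.} The proof given above for $\mathbb{O}^n$ uses only the polarization identity, the parallelogram law and self-adjointness. It therefore transfers verbatim once the parallelogram law is known in $\mathcal{Q}$.

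\textbf{Step 3: the upper bound.} For a unit vector $h$, write
\begin{align*}
\langle h,Th\rangle=\langle h,A_0h\rangle+\sum_{j=1}^7 e_j\langle h,A_jh\rangle .
\end{align*}
Each $\langle h,A_jh\rangle$ is real because $A_j$ is self-adjoint, so the right-hand side has modulus squared $\sum_{j=0}^7|\langle h,A_jh\rangle|^2$. Cauchy--Schwarz bounds this by $\sum_{j=0}^7\|A_jh\|^2=\langle h,(\sum_{j=0}^7 A_j^2)h\rangle$. Taking the supremum and applying Step 2 to the positive self-adjoint operator $\sum_{j=0}^7 A_j^2$ gives $w_\mathbb{O}(T)^2\le\frac12\|T^*T+TT^*\|$.

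\textbf{Step 4: the lower bound.} Apply $\sum_{j=0}^7 a_j^2\ge\frac18(\sum_{j=0}^7|a_j|)^2$ to the same real numbers, and then drop absolute values with arbitrary sign choices. This gives $w_\mathbb{O}(T)^2\ge\frac18\|\sum_{j=0}^7\pm A_j\|^2$ via Step 2. Put $S=\sum_{j=0}^7 A_j$ and use the eight sign patterns giving $S-2A_k$ for $k=0,\dots,7$. Using the C$^*$-identity $\|X\|^2=\|X^2\|$ for self-adjoint $X$ and the triangle inequality, summing over $k$ gives
\begin{align*}
64\,w_\mathbb{O}(T)^2\ge\Bigl\|4S^2+4\sum_{k=0}^7 A_k^2\Bigr\|\ge 4\Bigl\|\sum_{k=0}^7 A_k^2\Bigr\| .
\end{align*}
The last inequality holds because $S^2$ is positive. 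Combined with Step 1, this yields $\frac{1}{4\sqrt2}\sqrt{\|T^*T+TT^*\|}\le w_\mathbb{O}(T)$.

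\textbf{Main obstacle.} I expect the hardest part to be the non-associativity of the octonions. It enters in two places.
\begin{itemize}
\item Step 3 needs $\langle h,e_jA_jh\rangle=e_j\langle h,A_jh\rangle$ for every $h$. In $\mathbb{O}^n$ this was used implicitly in the proof of Theorem \ref{QKT}.
\item Steps 1 and 2 need the adjoint relation $(e_jA_j)^*=-A_je_j$, which together with (ii) gives $(e_jA_j)^*=-e_jA_j$. Step 1 also needs the real-scalar and positivity manipulations, and the C$^*$-identity $\|X\|^2=\|X^2\|$ for self-adjoint $X$.
\end{itemize}
My first attempt would be to interpret hypothesis (ii) in the strong sense needed: $e_jA_j$ is a well-defined bounded operator commuting appropriately with $e_j$-multiplication in the structure of \cite{GOLDSTINEHORWITZ}. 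I would then verify the identity $\langle h,A_jh\rangle\in\mathbb{R}$ directly from self-adjointness and the conjugate symmetry $\overline{\langle x,y\rangle}=\langle y,x\rangle$. Moving $e_j$ outside the inner product would then follow from the alternative law, since only two octonions are involved once the inner product is real. If that fails in general, I would restrict to the model where $\mathcal{Q}$ is a completed $\mathcal{H}\otimes\mathbb{O}$ and the $A_j$ are real operators, mirroring the second corollary.
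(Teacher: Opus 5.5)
Your plan follows the paper's route exactly (the paper only says the proof is ``similar to that of Theorem~\ref{QKT}''). However, the step you single out as the main obstacle is not merely delicate: it is false, and Steps~3 and~4 both rest on it. The identity $\langle h,e_jA_jh\rangle=e_j\langle h,A_jh\rangle$, and with it $|\langle h,Th\rangle|^2=\sum_{j=0}^7\langle h,A_jh\rangle^2$, fails already on $\mathbb{O}^2$ with real diagonal $A_j$. Take $A_1=\mathrm{diag}(1,-1)$ and all other $A_j=0$, so (i)--(iii) hold. Put $T=e_1A_1$ and $h=\frac{1}{\sqrt2}(1,e_2)$. Then
\[
\langle h,Th\rangle=\tfrac12\bigl(e_1+\overline{e_2}\,(-e_1e_2)\bigr)=\tfrac12(e_1+e_2e_4)=e_1,\qquad \langle h,A_1h\rangle=\tfrac12(1-1)=0 ,
\]
so $|\langle h,Th\rangle|^2=1$ while your formula gives $0$. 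Conversely, with $A_1=I$, $A_2=\mathrm{diag}(1,-1)$ (again (i)--(iii) hold), the same $h$ gives $\langle h,Th\rangle=0$ although $\sum_j\langle h,A_jh\rangle^2=1$. So neither direction survives pointwise, and both Step~3 (which needs $\le$) and Step~4 (which needs $\ge$) break.

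Your suggested justification cannot work. We have $\langle h,e_jA_jh\rangle=\sum_r\overline{h_r}\,\bigl(e_j(A_jh)_r\bigr)$, and extracting $e_j$ requires commuting it past $\overline{h_r}$. That is a failure of commutativity, already present in the quaternions, which no alternative law addresses. Reality of the sum $\sum_r\overline{h_r}(A_jh)_r$ says nothing about the individual summands. Your fallback model $\mathcal{H}\otimes\mathbb{O}$ with real $A_j$ does not help, since both examples live there. For genuinely octonionic self-adjoint $A_j$, even $\langle h,A_jh\rangle\in\mathbb{R}$ fails, because $\langle x,Ay\rangle=\langle A^*x,y\rangle$ holds only for real parts. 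For instance, $A_1=\begin{pmatrix}0&e_1\\-e_1&0\end{pmatrix}$ satisfies (i)--(ii), yet $h=\frac{1}{\sqrt2}(e_3,e_2)$ gives $\langle h,A_1h\rangle=-e_6$. The paper's proof of Theorem~\ref{QKT} writes the same equality, so mirroring it does not close the gap.

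In the real model you can repair both bounds, but by a different argument.

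\textbf{Upper bound.} Split $\langle h,Th\rangle$ only into its real and imaginary parts. With $B=\sum_{j=1}^7e_jA_j$, the number $\langle h,A_0h\rangle$ is real and $\langle h,Bh\rangle$ is purely imaginary, since $B$ is skew for $\mathrm{Re}\langle\cdot,\cdot\rangle$. Hence
\[
|\langle h,Th\rangle|^2=\langle h,A_0h\rangle^2+|\langle h,Bh\rangle|^2\le\|A_0h\|^2+\|Bh\|^2 .
\]
Hypotheses (ii)--(iii) give $B^2=-\sum_{j\ge1}A_j^2$, so $\|Bh\|^2=-\mathrm{Re}\langle h,B^2h\rangle=\sum_{j\ge1}\|A_jh\|^2$. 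This yields $w_\mathbb{O}(T)^2\le\|\sum_{j=0}^7A_j^2\|$ without the false identity.

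\textbf{Lower bound.} Restrict the supremum to real unit vectors $h\in\mathcal{H}\otimes 1$. For these, $e_j$ commutes and associates with every coordinate, so the eight-term splitting is genuinely valid. For real self-adjoint $X$, the supremum of $|\langle h,Xh\rangle|$ over real unit $h$ equals $\|X\|$, and your Step~4 then goes through unchanged.

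For non-real $A_j$ on a general $\mathcal{Q}$ this repair is not available, and the paper's argument does not cover that case either.
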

\begin{corollary}
	Let $\mathcal{H}$ be a real Hilbert space. Let  $A_0, \dots, A_7 :\mathcal{H} \to \mathcal{H} $ be  bounded linear operators satisfying the following conditions. 
	\begin{enumerate}[\upshape(i)]
		 	\item $A^*_j=A_j$ for all $0\leq j \leq 7$.
		\item $\sum_{j,k=1, j \neq k}^{7}\{A_j, A_k\}=0$.
	\end{enumerate}
Define 
\begin{align*}
	T\coloneqq A_0+\sum_{j=1}^{7}e_jA_j.
\end{align*}
	Then 
	\begin{align*}
		\frac{1}{4\sqrt{2}}\sqrt{\|T^*T+TT^*\|}\leq w_\mathbb{O}(T)\leq\frac{1}{\sqrt{2}}\sqrt{\|T^*T+TT^*\|}.	
	\end{align*}	
\end{corollary}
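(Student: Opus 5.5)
The plan is to reduce this corollary to the octonionic Hilbert space version of Theorem \ref{QKT} stated just above. The reduction needs two things. First, the real Hilbert space $\mathcal{H}$ and the operators $A_j$ must be transported to an octonionic Hilbert space. Second, the three hypotheses (i)--(iii) of that theorem must be checked for the transported operators.

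For the first step, I would form the octonionification $\mathcal{Q}\coloneqq \mathcal{H}\otimes_\mathbb{R}\mathbb{O}$. Its elements are formal sums $\sum_{l=0}^{7}h_le_l$ with $h_l \in \mathcal{H}$, where $e_0\coloneqq 1$. The right $\mathbb{O}$-action is the obvious one, and the octonion-valued inner product is
\begin{align*}
\left\langle \sum_{l} h_le_l, \sum_{m} g_me_m\right\rangle \coloneqq \sum_{l,m}\langle h_l, g_m\rangle_{\mathcal{H}}\,\overline{e_l}e_m .
\end{align*}
This is exactly how $\mathbb{O}^n$ arises from $\mathbb{R}^n$. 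Each real operator $A_j$ extends by $A_j(\sum_l h_le_l)\coloneqq \sum_l (A_jh_l)e_l$, and the norm does not change. Left multiplication by $e_j$ acts coordinatewise on the octonionic coefficients, so $T=A_0+\sum_j e_jA_j$ makes sense as a bounded operator on $\mathcal{Q}$.

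For the second step, I would verify the hypotheses one at a time. Self-adjointness of the extended $A_j$ follows from $A_j^*=A_j$ on $\mathcal{H}$: the inner product above splits into real inner products multiplied by fixed octonion constants. Commutation $e_jA_j=A_je_j$ holds because $A_j$ acts only on the $\mathcal{H}$-components and these are real, while $e_j$ acts only on the octonionic coefficients. The two actions are on different tensor factors, so they commute. For the anticommutator hypothesis, we have $\{e_jA_j,e_kA_k\}=e_je_kA_jA_k+e_ke_jA_kA_j$, using that the real operators commute with octonion multiplication. For $j\neq k$ we have $e_ke_j=-e_je_k$, so this equals $e_je_k[A_j,A_k]$. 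This is the condition of the preceding theorem.

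The main obstacle is that this is \emph{not} what hypothesis (ii) of the corollary supplies. Summing $e_je_k[A_j,A_k]$ over ordered pairs $j\neq k$ gives $2\sum_{j<k}e_je_k[A_j,A_k]$, whereas the corollary assumes $\sum_{j\neq k}\{A_j,A_k\}=0$, a condition on anticommutators. To close this gap, I would return to the proof of Theorem \ref{QKT} and recompute $T^*T+TT^*$ directly for the real-coefficient $T$. The cross terms there are $(-e_jA_j)(e_kA_k)+(e_jA_j)(-e_kA_k)$ type products, and once the sign conventions are tracked they should produce the anticommutator sum $\sum_{j\neq k}e_je_k\{A_j,A_k\}$ together with its symmetrisation. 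I would then try to show that assumption (ii), combined with the antisymmetry $e_ke_j=-e_je_k$, kills these cross terms, leaving $T^*T+TT^*=2\sum_{j=0}^{7}A_j^2$.

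If this identity holds, the remaining argument of Theorem \ref{QKT} goes through verbatim. It uses only self-adjointness of the $A_j$ and the identity $|\langle h,Th\rangle|^2=\sum_{j}|\langle h,A_jh\rangle|^2$. That identity is clear here because $\langle h,A_jh\rangle$ is real, since $A_j$ is a real self-adjoint operator. The sign-vector averaging over $S-2A_k$ then applies unchanged and yields both inequalities.
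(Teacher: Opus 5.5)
\textbf{The reduction and the mismatch.} Your reduction is the route the paper takes. It gives no separate proof: the corollary is presented as the real-coefficient case of the preceding Hilbert-space theorem, and the paper never checks that the corollary's hypothesis (ii) yields condition (iii) of that theorem. Your construction of $\mathcal{H}\otimes_{\mathbb{R}}\mathbb{O}$ and your checks of conditions (i) and (ii) of the theorem are fine. You also located the mismatch correctly: for real $A_j$ and $j\neq k$, $\{e_jA_j,e_kA_k\}=e_je_k[A_j,A_k]$ is a commutator, while the corollary's (ii) only says $\bigl(\sum_{j=1}^7A_j\bigr)^2=\sum_{j=1}^7A_j^2$.

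\textbf{Why your proposed repair fails.} Expanding directly gives $T^*T+TT^*=2\sum_{j=0}^7A_j^2-2\sum_{j\neq k}(e_jA_j)(e_kA_k)=2\sum_{j=0}^7A_j^2-2\sum_{j<k}e_je_k[A_j,A_k]$. So the cross terms are exactly the commutators you already found. The anticommutator expression $\sum_{j\neq k}e_je_k\{A_j,A_k\}$ you hope to see vanishes identically because $e_ke_j=-e_je_k$; if that were the cross term, no hypothesis would be needed at all. Hypothesis (ii) does not control the commutators. Take $\mathcal{H}=\mathbb{R}^2$, $A_1=\bigl(\begin{smallmatrix}1&0\\0&-1\end{smallmatrix}\bigr)$, $A_2=\bigl(\begin{smallmatrix}0&1\\1&0\end{smallmatrix}\bigr)$, and all other $A_j=0$. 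Then $\{A_1,A_2\}=0$, so the corollary's hypotheses hold. But $T=\bigl(\begin{smallmatrix}e_1&e_2\\e_2&-e_1\end{smallmatrix}\bigr)$ gives $T^*T+TT^*=4I-2e_1e_2[A_1,A_2]$ with $[A_1,A_2]=2\bigl(\begin{smallmatrix}0&1\\-1&0\end{smallmatrix}\bigr)\neq 0$. This operator has norm $8$, whereas $2\|A_1^2+A_2^2\|=4$. So the identity $T^*T+TT^*=2\sum_jA_j^2$, on which your plan rests, is false under the corollary's hypotheses.

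\textbf{The step you call clear also fails.} Realness of $\langle h,A_jh\rangle$ does not give $\langle h,e_jA_jh\rangle=e_j\langle h,A_jh\rangle$. In coordinates, $\langle h,e_jA_jh\rangle=\sum_{r,s}(A_j)_{rs}\,\overline{h_r}(e_jh_s)$, and $e_j$ cannot be moved past $\overline{h_r}$ because $\mathbb{O}$ is not commutative. In the same example, $h=\frac{1}{\sqrt2}(1,e_4)$ gives $\langle h,Th\rangle=2e_1$, while $\langle h,A_1h\rangle=\langle h,A_2h\rangle=0$. In fact $w_{\mathbb{O}}(T)=\|T\|=2$. This exceeds the bound $\|\sum_jA_j^2\|^{1/2}=\sqrt2$ produced by the first half of the proof of Theorem \ref{QKT}. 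The corollary's upper bound survives here, with equality $2=\frac{1}{\sqrt2}\sqrt8$, but not for the reasons given. The identity $|\langle h,Th\rangle|^2=\sum_j\langle h,A_jh\rangle^2$ is valid only for $h\in\mathcal{H}$.

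\textbf{What a correct version of your argument gives.} Restricting to $h\in\mathcal{H}$ and averaging over $S-2A_k$ yields $w_{\mathbb{O}}(T)^2\ge\frac{1}{16}\|\sum_{j=0}^7A_j^2\|$. The upper bound needs no commutator condition at all. Indeed $\langle h,A_0h\rangle$ is real and $\bigl\langle h,\bigl(\sum_{j=1}^7e_jA_j\bigr)h\bigr\rangle$ is purely imaginary. Hence $|\langle h,Th\rangle|^2\le\|A_0h\|^2+\bigl\|\bigl(\sum_{j=1}^7e_jA_j\bigr)h\bigr\|^2=\frac12(\|Th\|^2+\|T^*h\|^2)$ by the parallelogram law. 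When $T^*T$ is read as a composition, this equals $\frac12\mathrm{Re}\langle h,(T^*T+TT^*)h\rangle\le\frac12\|T^*T+TT^*\|$. However, turning the lower bound into $\frac{1}{32}\|T^*T+TT^*\|$ requires the commutator terms to vanish (for instance, $A_1,\dots,A_7$ pairwise commuting), and (ii) does not provide this. As written, your plan does not prove the stated lower bound.
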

We remark that Abu-Omar and Kittaneh \cite{ABUOMARKITTANEH} and Bhunia, Bag and Paul \cite{BHUNIABAGPAUL} improved Theorem \ref{KT} but the ideas used by them do not seem to carry over to octonionic Hilbert spaces.

\section{Conclusions}
	\begin{enumerate}[\upshape(i)]
	\item In 2005, Kittaneh derived a breakthrough improvement of classical numerical radius inequalities \cite{KITTANEH}.
	\item In this article, we showed that a constrained version of Kittaneh numerical radius inequalities can be obtained for certain classes of matrices over octonions.
\end{enumerate}

\section{Octonionic Crouzeix Problem}
Surprising Crouzeix result says the following. 
\begin{theorem} \label{CT} \cite{CROUZEIX, CROUZEIX2, CROUZEIX3, BICKELGORKIN, RANSFORDSCHWENNINGER} (\textbf{Crouzeix Theorem}) 
	For every $d \in \mathbb{N}$ and for every matrix $M\in \mathbb{M}_{d}(\mathbb{C})$, we have 
	\begin{align*}
		(\text{\textbf{Crouzeix-Palencia Inequality}})	\quad \quad \quad 	\|p(M)\|\leq (1+\sqrt{2}) \sup\left\{|p(z)|: z \in W_\mathbb{C}(M)\right\},  \quad \forall p \in \mathbb{C}[z].
	\end{align*}	
\end{theorem}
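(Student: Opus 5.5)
The natural route is the Crouzeix--Palencia argument. It is a Cauchy-integral / double-layer-potential argument on a convex domain, and it uses nothing from the octonionic part of the paper. I would first make two reductions. Since $W_\mathbb{C}(M)$ is compact and convex (Toeplitz--Hausdorff), approximate it from outside by bounded convex domains $\Omega \supset W_\mathbb{C}(M)$ with smooth boundary, and prove $\|p(M)\|\leq (1+\sqrt{2})\sup_{\overline{\Omega}}|p|$. Letting $\Omega \downarrow W_\mathbb{C}(M)$ and using continuity of $p$ then gives the claim. Second, by homogeneity, normalise $\sup_{\overline{\Omega}}|f|=1$ for $f$ holomorphic on a neighbourhood of $\overline{\Omega}$. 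Let $K$ be the best constant, i.e. the supremum of $\|f(M)\|$ over all such $f$. It is finite because $\sigma(M)\subset\Omega$ and the Cauchy integral bounds $f(M)$. The goal is $K\leq 1+\sqrt2$.

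The key construction is, for $\sigma\in\partial\Omega$ with outward normal $\nu(\sigma)$ and arclength $ds$,
\begin{align*}
\mu(\sigma,M)\coloneqq \frac{1}{2\pi i}\Big(\nu(\sigma)(\sigma-M)^{-1}-\overline{\nu(\sigma)}(\overline{\sigma}-M^*)^{-1}\Big).
\end{align*}
The first step is to show $\mu(\sigma,M)\geq 0$. This is a rewriting of the statement that $W_\mathbb{C}(M)$ lies in the half-plane $\{z:\mathrm{Re}(\overline{\nu(\sigma)}(\sigma-z))>0\}$, which holds by convexity. One tests $\mu$ on vectors of the form $(\sigma-M)u$. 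The second step is to compute $\int_{\partial\Omega}\mu\,ds = I$ via the Cauchy integral formula.

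Next, set
\begin{align*}
g(z)\coloneqq\frac{1}{2\pi i}\int_{\partial\Omega}\frac{\overline{f(\sigma)}}{\sigma-z}\,d\sigma
\end{align*}
and $S\coloneqq f(M)+g(M)^*=\int_{\partial\Omega}f(\sigma)\mu(\sigma,M)\,ds$. Positivity of $\mu$ with total mass $I$ gives $\|S\|\leq 1$; more precisely, one should aim for a bound of the form $\|S\|\leq 2$ coming from the double-layer normalisation. In parallel, the scalar case $M=z\in\Omega$ shows that $f+\overline{g}$ is an average of $f$ against the positive double-layer kernel on a convex curve, so $\sup_{\overline{\Omega}}|g|\leq 1$ (up to the additive constant from the mean).

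The final step combines these. Write $f(M)^*f(M)=S^*f(M)-g(M)f(M)$. The function $gf$ is holomorphic on $\overline{\Omega}$ with $\sup|gf|\leq 1$, so $\|(gf)(M)\|\leq K$, while $\|S^*f(M)\|\leq\|S\|K$. Taking the supremum over $f$ produces a quadratic inequality in $K$. The target is $K^2\leq 2K+1$, which yields $K\leq 1+\sqrt2$.

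I expect this last bookkeeping to be the main obstacle. A naive estimate of $\|(gf)(M)\|$ by $K$ gives only $K\leq 2+\dots$-type bounds or the wrong inequality. What I would try first is to apply the $S$-identity to the product $gf$ itself, or to $f^2$, and exploit that $g$ has zero boundary mean after subtracting a constant. This should turn one factor $K$ into the absolute constant $1$, giving exactly $K^2\leq 2K+1$ and hence the constant $1+\sqrt{2}$ in the Crouzeix--Palencia inequality.
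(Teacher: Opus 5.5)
The paper gives no proof of this theorem; it quotes it from Crouzeix--Palencia and Ransford--Schwenninger. Your outline follows that cited argument, but it has a genuine gap exactly where you suspect. The identity $f(M)^*f(M)=S^*f(M)-(gf)(M)$ only yields $K^2\leq 2K+K$, i.e.\ $K\leq 3$. Your suggested repairs (applying the $S$-identity to $gf$ or $f^2$, subtracting a boundary mean of $g$) are not arguments, and there is no reason for $\|(gf)(M)\|$ to be bounded by an absolute constant. The missing idea is to go one degree higher. For every operator $X$ one has $\|XX^*X\|=\|X\|^3$, and, since $f(M)^*=S^*-g(M)$,
\begin{align*}
f(M)f(M)^*f(M)=f(M)\bigl(S^*-g(M)\bigr)f(M)=f(M)S^*f(M)-(f^2g)(M).
\end{align*}
Hence $\|f(M)\|^3\leq 2K^2+K\sup_{\overline{\Omega}}|f^2g|\leq 2K^2+K$. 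Taking the supremum over $f$ gives $K^3\leq 2K^2+K$, i.e.\ $K^2\leq 2K+1$. The uncontrolled term still costs a full factor $K$; it is harmless only because it now competes with $K^3$ rather than $K^2$.

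This closing step needs the two lemmas in sharp form, and your versions are off.
\begin{enumerate}[\upshape(i)]
\item With $\nu$ the outward normal, your $\mu$ is $\frac{1}{\pi}$ times the imaginary part of $\nu(\sigma)(\sigma-M)^{-1}$. It is not positive (for the unit disc and $M=0$ it vanishes identically), and $\int f\mu\,ds\neq f(M)+g(M)^*$. Since $d\sigma=i\nu(\sigma)\,ds$, the correct kernel is $\mu(\sigma,M)=\frac{1}{2\pi}\bigl(\nu(\sigma)(\sigma-M)^{-1}+\overline{\nu(\sigma)}(\overline{\sigma}-M^*)^{-1}\bigr)$.
\item Its total mass is $2I$, not $I$: for $f\equiv 1$ one gets $g\equiv 1$ and $S=2I$. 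So the correct bound is $\|S\|\leq 2$, and $\|S\|\leq 1$ is false.
\item $\sup_{\overline{\Omega}}|g|\leq 1$ does not follow from the interior identity $f(z)+\overline{g(z)}=\int_{\partial\Omega}f(\sigma)\mu(\sigma,z)\,ds$, which only gives $|g|\leq 3$. You need the jump relation of the double-layer potential. As $z\to z_0\in\partial\Omega$ from inside, the right side tends to $\int_{\partial\Omega}f(\sigma)\mu(\sigma,z_0)\,ds+f(z_0)$, so $\overline{g(z_0)}=\int_{\partial\Omega}f(\sigma)\mu(\sigma,z_0)\,ds$. Here $\mu(\cdot,z_0)\geq 0$ has mass $1$, because a boundary point sees $\partial\Omega$ under the angle $\pi$. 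The maximum principle then finishes.
\end{enumerate}
The bound in (iii) must hold with no additive constant: with only $\sup|g|\leq c$, the cubic estimate gives $K^2\leq 2K+c$. Finally, $g$, and hence $f^2g$, is in general only holomorphic in $\Omega$ and continuous on $\overline{\Omega}$. So define $K$ over that class; the Cauchy integral over $\partial\Omega$ still defines the calculus, since $W_\mathbb{C}(M)$ is a compact subset of $\Omega$.
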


Based on Theorem \ref{CT}, we formulate the following problem. 
\begin{problem} (\textbf{Octonionic Crouzeix Problem})
	Whether there is a universal constant $R_\mathbb{O}$ satisfying the following: For every $d \in \mathbb{N}$ and for every $A\in \mathbb{M}_{d}(\mathbb{O})$, we have 
	\begin{align*}
		\|p(A)\|\leq R_\mathbb{O} \sup\left\{|p(z)|: z \in W_\mathbb{O}(A)\right\},  \quad \forall p(z)\coloneqq a_0+a_1z+a_2z^2+\cdots +a_nz^n \in \mathbb{O}[z], n \in \mathbb{N}
	\end{align*}
	where $\mathbb{O}[z]$ is the set of all polynomials over $\mathbb{O}$. 
	We conjecture that $R_\mathbb{O}$ exists and $R_\mathbb{O}=8$.
\end{problem}

\section{Acknowledgments}
AI is used for the literature survey and grammar improvement.

 \bibliographystyle{plain}
 \bibliography{reference.bib}

\begin{thebibliography}{10}

\bibitem{ABUOMARKITTANEH}
Amer Abu-Omar and Fuad Kittaneh.
\newblock Upper and lower bounds for the numerical radius with an application
  to involution operators.
\newblock {\em Rocky Mt. J. Math.}, 45(4):1055--1064, 2015.

\bibitem{BAEZ}
John~C. Baez.
\newblock The octonions.
\newblock {\em Bull. Am. Math. Soc., New Ser.}, 39(2):145--205, 2002.

\bibitem{BHUNIABAGPAUL}
Pintu Bhunia, Santanu Bag, and Kallol Paul.
\newblock Numerical radius inequalities and its applications in estimation of
  zeros of polynomials.
\newblock {\em Linear Algebra Appl.}, 573:166--177, 2019.

\bibitem{BICKELGORKIN}
Kelly Bickel, Pamela Gorkin, Anne Greenbaum, Thomas Ransford, Felix~L.
  Schwenninger, and Elias Wegert.
\newblock Crouzeix's conjecture and related problems.
\newblock {\em Comput. Methods Funct. Theory}, 20(3-4):701--728, 2020.

\bibitem{CROUZEIX3}
M.~Crouzeix and C.~Palencia.
\newblock The numerical range is a {{\((1+\sqrt{2})\)}}-spectral set.
\newblock {\em SIAM J. Matrix Anal. Appl.}, 38(2):649--655, 2017.

\bibitem{CROUZEIX}
Michel Crouzeix.
\newblock Bounds for analytical functions of matrices.
\newblock {\em Integral Equations Oper. Theory}, 48(4):461--477, 2004.

\bibitem{CROUZEIX2}
Michel Crouzeix.
\newblock Numerical range and functional calculus in {Hilbert} space.
\newblock {\em J. Funct. Anal.}, 244(2):668--690, 2007.

\bibitem{GAUWUBOOK}
Hwa-Long Gau and Pei~Yuan Wu.
\newblock {\em Numerical ranges of {Hilbert} space operators}, volume 179 of
  {\em Encycl. Math. Appl.}
\newblock Cambridge: Cambridge University Press, 2021.

\bibitem{GOLDBERGTADMOR}
Moshe Goldberg and Eitan Tadmor.
\newblock On the numerical radius and its applications.
\newblock {\em Linear Algebra Appl.}, 42:263--284, 1982.

\bibitem{GOLDSTINEHORWITZ}
H.~H. Goldstine and L.~P. Horwitz.
\newblock Hilbert space with non-associative scalars. {I}.
\newblock {\em Math. Ann.}, 154:1--27, 1964.

\bibitem{GUSTAFSONRAO}
Karl~E. Gustafson and Duggirala K.~M. Rao.
\newblock {\em Numerical range. {The} field of values of linear operators and
  matrices}.
\newblock Universitext. New York, NY: Springer, 1996.

\bibitem{HALMOS2}
P.~R. Halmos.
\newblock Numerical ranges and normal dilations.
\newblock {\em Acta Sci. Math.}, 25:1--5, 1964.

\bibitem{HALMOS}
Paul~R. Halmos.
\newblock {\em A {Hilbert} space problem book}, volume~19 of {\em Grad. Texts
  Math.}
\newblock Springer, Cham, 1982.

\bibitem{HAUSDORFF}
F.~Hausdorff.
\newblock Der {Wertvorrat} einer {Bilinearform}.
\newblock {\em Math. Z.}, 3:314--316, 1919.

\bibitem{HORNJOHNSON}
Roger~A. Horn and Charles~R. Johnson.
\newblock {\em Topics in matrix analysis}.
\newblock Cambridge University Press, 1991.

\bibitem{ISTRATESCU}
Vasile~I. Istratescu.
\newblock {\em Introduction to linear operator theory}, volume~65 of {\em Pure
  Appl. Math., Marcel Dekker}.
\newblock Marcel Dekker, Inc., New York, NY, 1981.

\bibitem{KITTANEH}
Fuad Kittaneh.
\newblock Numerical radius inequalities for {Hilbert} space operators.
\newblock {\em Stud. Math.}, 168(1):73--80, 2005.

\bibitem{LI}
Chi-Kwong Li.
\newblock A simple proof of the elliptical range theorem.
\newblock {\em Proc. Am. Math. Soc.}, 124(7):1985--1986, 1996.

\bibitem{RANSFORDSCHWENNINGER}
Thomas Ransford and Felix~L. Schwenninger.
\newblock Remarks on the {Crouzeix}-{Palencia} proof that the numerical range
  is a $(1+\sqrt{2})$-spectral set.
\newblock {\em SIAM J. Matrix Anal. Appl.}, 39(1):342--345, 2018.

\bibitem{RODMAN}
Leiba Rodman.
\newblock Hermitian octonion matrices and numerical ranges.
\newblock {\em Electron. J. Linear Algebra}, 27:515--533, 2014.

\bibitem{TOEPLITZ}
O.~Toeplitz.
\newblock Das algebraische {Analogon} zu einem {Satze} von {Fej{\'e}r}.
\newblock {\em Math. Z.}, 2:187--197, 1918.

\end{thebibliography}

\end{document}